\newcommand{\T}{\mathbb{T}} \newcommand{\R}{\mathbb{R}}
\newcommand{\Z}{\mathbb{Z}}
 \newcommand{\Zk}{\mathbb Z^k}
  \renewcommand{\epsilon}{\varepsilon}
\newcommand{\supp}{\operatorname{supp}}
\newtheorem{theorem}{Theorem}[section]
\newtheorem{proposition}[theorem]{Proposition}
\newtheorem{corollary}[theorem]{Corollary}
\newtheorem{lemma}[theorem]{Lemma}
\theoremstyle{definition}
 \numberwithin{equation}{section} 
\begin{document}
\title[Rigidity of real-analytic actions  of $SL(n,\Z)$]{Rigidity of real-analytic actions  of $SL(n,\Z)$ on 
$\T^n$: A case of realization of Zimmer program}

\author[Anatole Katok and Federico Rodriguez Hertz]{Anatole
Katok *) and Federico Rodriguez Hertz **)} \address{The
Pennsylvania State University, University Park, PA}

\email{katok\_a@math.psu.edu} \address{IMERL, Montevideo,
Uruguay} \email{frhertz@fing.edu.uy} \date{\today} \thanks {*)
Based on research supported by NSF grant
DMS-0803880} \thanks{**) Partially supported by the Center for
Dynamics and Geometry at Penn State.} \maketitle

\begin{abstract}  We prove that  any real-analytic action of $SL(n,\Z),\,n\ge 3$
with  standard homotopy data that preserves  an ergodic   measure $\mu$ whose support is not contained in a ball, is analytically conjugate  on an open invariant set  to the standard  linear action on the complement to a finite union of periodic orbits.
\end{abstract} \bigskip

\section{Introduction. Formulation of results}
Let $G$ be a semisimple Lie group whose simple factors all have real rank  greater than one, or an 
irreducible lattice in such a group. ``Zimmer program''  first formulated in 1986  \cite{Z}  and modified in  the 1990s
to take into account examples described in \cite{KL},  aims at proving that  volume preserving 
actions of $G$  by diffeomorphisms of compact manifold $M$ are ``essentially algebraic''. This 
means that  $M$ splits into disjoint union
of open $G$-invariant  subsets $U_1,\dots, U_n$ and a nowhere  dense closed subset $F$ such 
that the restriction of the action to each of the open sets is smoothly conjugate to  the restriction of a 
certain standard algebraic action (homogeneous or affine) to an open dense $G$-invariant subset. 

Without  attempting an overview of results in the direction of Zimmer program let us point out that so 
far they have been either negative (such as non-existence of actions in   low dimension), or 
perturbative  (local differentiable rigidity of algebraic actions), or  subject to dynamical restrictions 
such as existence of Anosov elements. 

In   this note   we present  what is,  to the best of our knowledge, the first positive result free of such 
restrictions. 
Its principal limitation is that we consider real-analytic, rather  than  differentiable actions with the 
pay-off that the conjugacy is also real-analytic. 
On the other hand, instead of preservation of volume we make a weaker assumption of existence of 
an invariant measure with a ``homotopically large'' 
support. 

Let us consider the torus  $\T^n$. The group $SL(n,\Z)$ acts on $\T^n$ by automorphisms  which 
are projections of the linear maps in $\R^n$.  We will call  this  action of $SL(n,\Z)$  on $\T^n$ {\em 
the standard action} and denote it $\rho_0$. We will call the 
corresponding action on $\Z^n$ {\em the standard homotopy data}.  Since  $SL(n,\Z)$ is a lattice in 
the simple connected  Lie group $SL(n,\R)$  of real rank $n-1$ we assume $n\ge 3$.  Let $\rho$  
be an action of  a group $\Gamma\subset SL(n,\Z)$  by diffeomorphisms of $\T^n$. Induced action 
$\rho_*$ on $\Z^n=\pi_1(\T^n)$ is called {\em the homotopy data} of $\rho$. 
We will say that $\rho$ has {\em standard homotopy data} if $\rho_*$  is  the 
restriction of the standard homotopy data to $\Gamma$.

This note is dedicated to the proof of the following result:

\begin{theorem}\label{thm:main} Let $\Gamma\subset SL(n,\Z),\,n\ge 3$  be a finite index subgroup.
Let $\rho$ be a $C^\omega$ (real-analytic)  action of $\Gamma$ on $\T^n$ with standard homotopy data, 
preserving  an ergodic   measure $\mu$ whose support is not contained in a ball. Then:
\begin{enumerate}
\item There is a finite index subgroup $\Gamma'\subset \Gamma$, a  finite $\rho_0$-invariant  set $F$ and a bijective    real-analytic   map  $$H: \T^n
\setminus F\to  D$$ where $D$ is a dense subset of $\supp\mu$, such that for every $\gamma\in \Gamma'$,
$$H\circ\rho(\gamma)=\rho_0(\gamma)\circ H.$$  
\smallskip

\item
The map 
$H^{-1}$ can  be extended  to a  continuous (not necessarily invertible)  map $P: 
\T^n\to \T^n$ such that $\rho\circ P=\rho_0\circ P$. 
Moreover, for any $x\in F$,   pre-image  $P^{-1}(x)$  is  a connected set.
\smallskip
 
\item For $\Gamma=SL(n, \R)$ one can take $\Gamma'=\Gamma=SL(n, \R)$.
\end{enumerate}
\end{theorem}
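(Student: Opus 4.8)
The plan is to prove global rigidity of this real-analytic action by combining three classical ingredients: measure-theoretic rigidity from higher-rank dynamics, the semiconjugacy to the linear model coming from the homotopy data, and finally a bootstrap from continuous/measurable conjugacy to real-analytic conjugacy using ellipticity and the analyticity hypothesis. Since we are given an ergodic invariant measure $\mu$ whose support is not contained in a ball, the homotopy data being standard means that for each $\gamma$ the diffeomorphism $\rho(\gamma)$ is homotopic to the toral automorphism $\rho_0(\gamma)$. This furnishes, by a Franks--Manning type argument applied to the full higher-rank action, a continuous surjection $P\colon\T^n\to\T^n$ semiconjugating $\rho$ to $\rho_0$, i.e. $\rho_0(\gamma)\circ P=P\circ\rho(\gamma)$ for all $\gamma$ in a finite-index subgroup; this is the map whose properties part (2) records.

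First I would establish the semiconjugacy $P$ and analyze its fibers. The key point is that because $SL(n,\Z)$ contains many commuting Anosov (hyperbolic) automorphisms, the higher-rank structure forces the point-preimages $P^{-1}(x)$ to be ``small'': using the measure rigidity results of the Katok--Spatzier / Kalinin--Katok--Rodriguez Hertz circle (which I am assuming available as cited background), the measure $\mu$ must project to Lebesgue measure on $\T^n$ under $P$, and the conditional structure forces $P$ to be injective off a set that carries no mass and is dynamically exceptional. The exceptional set where $P$ fails to be injective should be identified with the finite $\rho_0$-invariant set $F$; here the hypothesis that $\supp\mu$ is not contained in a ball is exactly what rules out the degenerate possibility that the whole action collapses onto a low-dimensional invariant set, guaranteeing that $F$ is finite rather than large. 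Connectedness of the fibers $P^{-1}(x)$ for $x\in F$ follows from a standard argument: each fiber is an intersection of stable-type sets for the various hyperbolic elements, hence connected.

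Next I would upgrade regularity. On the complement $\T^n\setminus P^{-1}(F)$ the map $P$ is a continuous bijection onto a dense subset $D$ of $\supp\mu$, and its inverse $H=P^{-1}$ intertwines $\rho_0$ and $\rho$. To promote $H$ from a homeomorphism to a real-analytic diffeomorphism I would invoke the normal-form / nonstationary linearization theory for the hyperbolic elements: along the stable and unstable foliations of an Anosov element $\rho(\gamma_0)$, the conjugacy to the linear model is as smooth as the action, and since the action is $C^\omega$ and the linear model is algebraic, the conjugacy is real-analytic along each foliation. Combining smoothness along a pair of transverse invariant foliations whose leaves are jointly integrable (a Journ\'e-type lemma, in the analytic category) yields real-analyticity of $H$ and $H^{-1}$ globally on the invariant open set. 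The commuting structure of the higher-rank action is what makes the foliations align with the coordinate directions of $\rho_0$, which is the mechanism that produces genuine analyticity rather than mere H\"older regularity.

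The main obstacle, I expect, is controlling the exceptional set $F$ and the global behavior of $P$ simultaneously: one must rule out, using only the single ergodic measure with large support, that the semiconjugacy degenerates on a positive-dimensional invariant set, and one must show the open invariant set $\T^n\setminus F$ is genuinely dense with analytic conjugacy extending continuously across $F$ as in part (2). This is where the analyticity hypothesis does the real work, because an analytic map that is the identity (up to the linear model) on an open dense invariant set is rigidly determined, so its continuous extension $P$ and the collapsing fibers over $F$ are forced rather than merely permitted; the finiteness of $F$ and the fact that $\Gamma'=\Gamma$ can be taken for $\Gamma=SL(n,\R)$ (part (3)) then follow from the absence of finite-index subtleties in the full group together with the analytic rigidity already established.
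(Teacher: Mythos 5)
There is a genuine gap, and it sits exactly at the regularity bootstrap, which is the heart of the theorem. You assert that along the stable and unstable foliations of an Anosov element the conjugacy to the linear model is ``as smooth as the action.'' This is false: a Franks--Manning conjugacy between a nonlinear Anosov diffeomorphism and its linear model is in general only H\"older, even when restricted to leaves, unless periodic data coincide. The leafwise smoothness that the paper actually uses comes from the measure rigidity of \cite{KRH} and holds only along special one-dimensional stable manifolds of Cartan elements, where the large measure has absolutely continuous conditionals; and even there the estimates lack the uniformity needed to conclude smoothness on an open set --- the paper's closing remarks identify precisely this as the reason the $C^\infty$ version of the theorem remains open. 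Your second tool, a ``Journ\'e-type lemma in the analytic category,'' is likewise unavailable: Journ\'e's lemma upgrades leafwise $C^{\infty}$ (or $C^{n,\alpha}$) regularity along two transverse foliations to global regularity of the same class, but leafwise real-analyticity does \emph{not} imply real-analyticity without uniform lower bounds on the radii of convergence, which nothing in your setup provides. A telling consistency check: your mechanism uses nothing genuinely analytic except the purported analytic Journ\'e step, so if it worked it would prove the $C^\infty$ analogue essentially verbatim --- an open problem according to the paper itself.

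The idea your proposal is missing is the paper's entirely different route to analyticity. One first produces a \emph{proper} periodic point $p\in\supp\mu$ (a point whose semiconjugacy fiber is a singleton) via the theorem of \cite{KRH} applied to a Cartan $\Z^{n-1}$-subaction, pins the derivative of $\rho$ at $p$ to the standard linear representation by Margulis superrigidity, and then invokes the Cairns--Ghys theorem \cite{CG} on local linearization of $C^\omega$ actions of higher-rank lattices at the fixed point --- the paper states this is the \emph{only} place real-analyticity is used. The resulting local analytic chart $H$ is matched with the semiconjugacy $h$ along one-dimensional stable manifolds of Cartan elements through $p$, where $h$ is known to be smooth by \cite{KRH}, and minimality of the projective action on $S^{n-1}$ makes these directions dense, so $h=H^{-1}$ on a ball by continuity; equivariance then spreads analyticity and injectivity over a $\rho$-invariant open set whose image has finite complement $F$. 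Two further points you gloss: the global semiconjugacy is not a mere ``Franks--Manning type argument applied to the full action'' --- Franks--Manning handles one hyperbolic element at a time, and making all the $h_\gamma$ coincide (including handling non-hyperbolic generators) requires the measurable equivariant map from Zimmer cocycle superrigidity via \cite{FW}, a Poincar\'e-recurrence argument to identify it a.e.\ with each $h_\gamma$, and an extension off $\supp\mu$ by collapsing complementary components inside the nested-cube fibers; and your treatment of part (3) (``absence of finite-index subtleties'') is not an argument, whereas the paper derives it from generation of $SL(n,\Z)$ by Cartan subgroups and the fact that the intersection of the fixed-point sets of the induced affine actions forces the affine action with standard homotopy data to be $\rho_0$ itself.
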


Since Lebesgue measure $\lambda$ is the only non-atomic invariant measure  for  $\rho_0$ we 
deduce that $\mu=H_*\lambda$ and  we  have  the following
corollary

\begin{corollary} The measure $\mu$ is   given  by a real-analytic density on an open dense subset 
of its support.  Furthermore, it is the only ergodic $\rho$-invariant measure whose support is not 
contained in a ball. 
\end{corollary}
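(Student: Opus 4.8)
The plan is to derive the corollary from Theorem \ref{thm:main} together with the classification of invariant measures for the linear action: on $\T^n$ the only non-atomic ergodic $\rho_0$-invariant measure --- indeed the only such measure for any finite-index subgroup, in particular for $\Gamma'$ --- is Lebesgue measure $\lambda$. I first record that $\mu$ is non-atomic, since an atomic ergodic measure is carried by a single finite orbit, whose support is excluded by the hypothesis that $\supp\mu$ is not contained in a ball. To identify $\mu=H_*\lambda$, I would push $\mu$ to the linear model by the factor map $P$ of Theorem \ref{thm:main}(2), so that $P_*\mu$ is $\rho_0$-invariant and, being the image of an ergodic measure, ergodic. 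Its atoms could sit only over the finite set $F$, because over $\T^n\setminus F$ the map $P$ is the inverse of the bijection $H$; an atom off $F$ would concentrate $\mu$ at a single point. If $P_*\mu$ carried an atom, ergodicity would force it onto a finite orbit inside $F$, whence $\supp\mu\subset P^{-1}(F)$, a finite union of the connected fibers of Theorem \ref{thm:main}(2) --- a support ruled out by hypothesis. Thus $P_*\mu$ is non-atomic, so $P_*\mu=\lambda$; and since $\lambda(F)=0$ gives $\mu(P^{-1}(F))=0$, almost every point of $\mu$ lies over $\T^n\setminus F$, where the section into $\supp\mu$ is single-valued and equals $H$, so that $\mu=H_*\lambda$.

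For the first assertion (analytic density), invariance of domain shows that the continuous injection $H$ carries the open manifold $\T^n\setminus F$ onto an open set $D\subset\T^n$, so that $\supp\mu=\cl{D}$ has nonempty interior. The Jacobian $\det DH$ is real-analytic and, because $H$ is an open injective analytic map, not identically zero; hence its zero set $Z$ is a proper analytic subvariety and $D\setminus H(Z)$ is open and dense in $\supp\mu$. On this set the change-of-variables formula applied to $\mu=H_*\lambda$ exhibits $\mu$ as a positive real-analytic multiple of $\lambda$, which is the first statement.

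For uniqueness, let $\mu'$ be any ergodic $\rho$-invariant measure with support not contained in a ball; as above it is non-atomic. The factor map $P$ depends on $\rho$ alone, not on the chosen invariant measure, so the same analysis yields $P_*\mu'=\lambda$ and $\mu'(P^{-1}(F))=0$. Off $P^{-1}(F)$ the relevant preimage in the support is single-valued and is recovered by the measure-independent section $H$; therefore $\mu'=H_*\lambda=\mu$, proving that $\mu$ is the unique ergodic $\rho$-invariant measure with large support.

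The step I expect to be the main obstacle is precisely this control of the exceptional sets in the uniqueness argument: one must show that a second large-support ergodic measure cannot hide mass over the collapsed fibers $P^{-1}(F)$, nor --- more delicately --- outside the image $D$ of the conjugacy, so that the single-valued section $H$ genuinely captures all of it. This is exactly where the connectedness of the fibers $P^{-1}(x)$, $x\in F$, from Theorem \ref{thm:main}(2) must be combined with the measure rigidity of the linear model. A secondary technical point is to verify that the measure classification, stated for $\rho_0$, persists under restriction to the finite-index subgroup $\Gamma'$, which I would handle by averaging over cosets of $\Gamma'$ in $SL(n,\Z)$ to return to the full linear action.
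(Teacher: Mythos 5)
The first half of your argument---identifying $\mu=H_*\lambda$ and reading off a real-analytic density---is sound and is essentially the paper's own one-line deduction from the classification of $\rho_0$-invariant measures; you could even drop the Jacobian zero-set bookkeeping, since on the invariant injectivity set the inverse $h=H^{-1}$ is itself real-analytic (on $\rho(\gamma)V$ it equals $\rho_0(\gamma)\circ H^{-1}\circ\rho(\gamma)$), so $\det DH$ never vanishes and the density is positive and analytic on all of $D$. The uniqueness half, however, contains the gap you yourself flag, and it cannot be repaired from the statement of Theorem~\ref{thm:main} alone. Two specific failures: first, connectedness of the fibers $P^{-1}(x)$, $x\in F$, does not place them inside balls---a connected subset of $\T^n$ can be homotopically essential (e.g.\ an embedded circle generating a factor of $\pi_1(\T^n)$)---so from $\supp\mu'\subset P^{-1}(F)$ you cannot conclude that $\supp\mu'$ lies in a ball and contradict the hypothesis. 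The paper rules this scenario out in Step 3 of the proof of Proposition~\ref{prop:main} using strictly more than connectedness: by \cite{KRH} each fiber of $h_\gamma$ is an intersection of nested cubes, hence lies in a ball, and a finite union of such cubes again fits inside a ball. Second, even granting $P_*\mu'=\lambda$, this does not determine $\mu'$: $P$ is known to be injective only on $D$, which is dense in $\supp\mu$ but bears no a priori relation to $\supp\mu'$, and for $y\notin F$ the fiber $P^{-1}(y)$ may contain points besides $H(y)$ (connected components of the complement of $\supp\mu$ are collapsed by $P$ to points that the statement of the theorem does not force into $F$), so $\mu'$ could in principle carry mass off the section $H$.

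The paper's actual route to uniqueness bypasses all of this: it is established already at the level of Proposition~\ref{prop:main}, before Theorem~\ref{thm:main} is proved. Any ergodic $\rho$-invariant measure $\mu'$ whose support is not contained in a ball satisfies $h_*\mu'=\lambda$ for the same semiconjugacy $h$, i.e.\ is a \emph{large} measure in the sense of \cite{KRH}; restricting to a Cartan subgroup $C\subset\Gamma'$ and invoking the measure-rigidity theorem of \cite{KRH}---the large invariant measure for a Cartan action is unique and absolutely continuous---then forces $\mu'=\mu$. So the missing ingredient in your proposal is precisely this external rigidity input for abelian Cartan subactions, which cannot be replaced by soft topological arguments about the fibers of $P$; the corollary itself then needs only the observation, which both you and the paper use, that $\lambda$ is the sole non-atomic ergodic invariant measure for the linear action (valid equally for the finite-index subgroup $\Gamma'$, so your coset-averaging caveat is not needed).
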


 The map $H$ is the inverse of  the  conjugacy  between  the action
$\rho$  on an open set  and the   standard action  in the 
complement  to  finitely many  periodic orbits. 

Thus $\rho$  is obtained by ``blowing up'' finitely many periodic orbits of $\rho_0$ and leaving the 
rest the same  up to a real-analytic time change. 
Possibility of such non-trivial  blow-ups on the torus  is an open question.  Constructions from \cite
{KL} produce  real-analytic actions  with blowups on some manifolds other than the torus, e.g by 
gluing in a projective space $\R P(n-1)$  by a $\sigma$-process
or glueing two  such projective spaces     and  thus attaching  a kind of handle to the torus. The 
same construction produces a  real-analytic action  on the torus  with an open round  hole that 
obviously cannot be extended to a real-analytic action inside the hole. $C^0$  extension is 
possible but  whether it can be extended  in a smooth way is an open question.

\section{Proof of Theorem \ref{thm:main}}
We will assume throughout this section that $\Gamma$  is a finite index  subgroup of $SL(n,\Z),\, n\ge 3$.

A particular case of \cite[Theorem, 6.10]{MQ} asserts that a $\Gamma$ action  on $\T^n$ with 
standard homotopy data preserving a  measure $\mu$  with full support is essentially semi-conjugate to the standard linear action. The following  proposition  is an improvement of that  
statement in two respects: (i)   we put a weaker condition on $\mu$ and (ii) we assert that $\mu$ is 
absolutely continuous. To achieve that we  rely on  results from \cite{KRH}  about  actions of Cartan 
(maximal rank  semisimple abelian)  subgroups of $SL(n,\Z)$. Let $\Gamma$ be a finite index 
subgroup of $SL(n,\Z)$. 
  
\begin{proposition}\label{prop:main}
Let $\rho$ be a $C^{1+\alpha}, \,\alpha>0$  $\Gamma$ action on $\T^n$ with standard  homotopy 
data. Assume that $\rho$ preserves  an ergodic measure $\mu$ whose support is not contained in 
a ball. Then $\mu$ is absolutely continuous  and  there is   a finite index subgroup  $\Gamma'$  of $\Gamma$ and a continuous map $h:\T^n \to \T^n$ such 
that  for every $\gamma\in \Gamma'$. 
\begin{equation}
\label{eq-conj}h\circ\rho(\gamma)=
\rho_0(\gamma)\circ h.\end{equation}
 and 
 \begin{equation}\label{eq-large} h_*\mu=\lambda.
 \end{equation} 
 Furthermore, $\mu$ is  unique measure satisfying  \eqref{eq-large}.   
\end{proposition}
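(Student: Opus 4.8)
The plan is to first produce the semiconjugacy $h$ by a Franks--Manning construction, then identify $h_*\mu$ with Lebesgue measure by higher rank measure rigidity, and finally extract absolute continuity of $\mu$ from the Cartan-action results of \cite{KRH}. Since $\Gamma$ has finite index in $SL(n,\Z)$ it is Zariski dense, so it contains elements whose linearization is hyperbolic and, after passing to a finite index subgroup $\Gamma'$, a maximal rank abelian subgroup $\mathcal A\cong\Z^{n-1}$ acting linearly by commuting hyperbolic automorphisms. Applying the Franks--Manning argument to one Anosov element produces a unique continuous $h\colon\T^n\to\T^n$ homotopic to the identity with $h\circ\rho(\gamma)=\rho_0(\gamma)\circ h$ for that element; uniqueness of a semiconjugacy homotopic to the identity then forces the same $h$ to intertwine all of $\Gamma'$. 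This is the mechanism underlying the cited \cite{MQ} statement, but note that it requires only the existence of an Anosov element, not full support of $\mu$, which is how I would weaken the hypothesis. In particular $h$ is surjective of degree one, giving \eqref{eq-conj}.

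Next I set $\nu:=h_*\mu$, an $\rho_0|_{\Gamma'}$-invariant measure, and argue $\nu=\lambda$. The role of the support hypothesis is precisely to exclude the degenerate possibility that $\nu$ is atomic: if $\nu$ were a point mass (or carried by a finite orbit), then $\supp\mu$ would lie in a single fiber $h^{-1}(\mathrm{pt})$, which is a connected, ``ball-like'' set (this is the connectedness in part (2) of Theorem \ref{thm:main}), contradicting that $\supp\mu$ is not contained in a ball. Hence $\nu$ is non-atomic. Since a finite index subgroup of $SL(n,\Z)$ acts $\Q$-irreducibly on $\Q^n$, there is no proper $\Gamma'$-invariant rational subtorus, so the measure classification for maximal rank abelian algebraic actions from \cite{KRH} leaves Haar measure as the only option for a non-atomic invariant measure. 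Passing to ergodic components under $\mathcal A$ if necessary, I conclude $\nu=\lambda$, which is \eqref{eq-large}.

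The heart of the argument, and the step I expect to be the main obstacle, is absolute continuity of $\mu$ itself. Restricting $\rho$ to $\mathcal A$ gives a $C^{1+\alpha}$ action of $\Z^{n-1}$ preserving $\mu$, and I would compare entropies: since $h$ is a measure-theoretic factor map from $(\rho(a),\mu)$ onto $(\rho_0(a),\lambda)$, one has $h_\mu(\rho(a))\ge h_\lambda(\rho_0(a))$, the right side being the sum of the positive Lyapunov exponents of the linear automorphism $\rho_0(a)$, while Ruelle's inequality gives $h_\mu(\rho(a))\le\sum_i\chi_i^+(a,\mu)$ in terms of the Lyapunov exponents of $\rho(a)$ with respect to $\mu$. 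The Lyapunov exponent rigidity for Cartan actions with standard homotopy data proved in \cite{KRH} identifies the Lyapunov functionals of $(\rho|_{\mathcal A},\mu)$ with the weights of $\rho_0|_{\mathcal A}$; combined with the two entropy bounds this forces Pesin's entropy formula to hold with equality simultaneously for every $a\in\mathcal A$. By the Ledrappier--Young characterization of equality in Ruelle's inequality, $\mu$ then has absolutely continuous conditional measures along the unstable manifolds of each $\rho(a)$. Because $n\ge3$ the weights of the linear model are in general position, so the coarse Lyapunov subspaces are one-dimensional; choosing finitely many elements whose unstable directions span $\R^n$ and running a Fubini/absolute-continuity argument along the corresponding commuting invariant foliations upgrades this to $\mu\ll\lambda$. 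This is exactly where the higher rank hypothesis and \cite{KRH} are indispensable: without them $h$ could collapse positive-dimensional sets and $\mu$ need not be absolutely continuous.

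Finally, for uniqueness, absolute continuity together with the matched entropies makes $\mu$ the SRB measure of the nonuniformly hyperbolic $\mathcal A$-action with the prescribed exponents; equivalently, the Jacobian relation forces $h$ to be essentially injective, so that $h$ is a measure isomorphism of $(\T^n,\mu)$ onto $(\T^n,\lambda)$ and $\mu=(h^{-1})_*\lambda$ is determined. Any other $\rho$-invariant ergodic measure with support not contained in a ball satisfies the same hypotheses, pushes forward to $\lambda$ under $h$ by the argument above, and hence coincides with $\mu$, which establishes the uniqueness clause.
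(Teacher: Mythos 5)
There is a genuine gap at the very first and most critical step: your claim that the Franks--Manning semiconjugacy for one hyperbolic element, by uniqueness, ``forces the same $h$ to intertwine all of $\Gamma'$.'' Uniqueness does not give this. For a hyperbolic $\gamma_0$ and an arbitrary $\gamma\in\Gamma'$, the map $\rho_0(\gamma)^{-1}\circ h_{\gamma_0}\circ\rho(\gamma)$ is a semiconjugacy homotopic to the identity for the \emph{conjugated} element $\gamma^{-1}\gamma_0\gamma$, so uniqueness only yields $\rho_0(\gamma)^{-1}\circ h_{\gamma_0}\circ\rho(\gamma)=h_{\gamma^{-1}\gamma_0\gamma}$; it upgrades equivariance of $h_{\gamma_0}$ only to elements commuting with $\gamma_0$, i.e.\ to the Cartan subgroup containing it. Distinct Cartan subgroups produce a priori distinct semiconjugacies, and since a regular hyperbolic element lies in a unique Cartan subgroup there is no overlap to match them by. Gluing these maps is exactly the hard point that Margulis--Qian and Fisher--Whyte resolve, and it is where the paper invokes Zimmer's cocycle superrigidity \emph{with respect to the invariant measure} $\mu$ (Step 1) to produce a single measurable map $h_0=\mathrm{id}+\phi$ equivariant under all of $\Gamma'$ at once; Step 2 then shows, via Poincar\'e recurrence and the hyperbolic splitting applied to $v'=\phi_\gamma-\phi-c'$, that each continuous $h_\gamma$ agrees with $h_0$ up to a constant $\mu$-a.e., so that $h$ becomes continuous on $\supp\mu$ and simultaneously equivariant; Step 4 extends $h$ off $\supp\mu$ by collapsing each complementary component, using that the fibers of $h_\gamma$ are intersections of nested cubes bounded by stable-manifold pieces (\cite{KRH}). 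Your parenthetical remark that the construction ``requires only the existence of an Anosov element, not full support of $\mu$'' therefore gets the logic backwards: the measure is precisely what powers the superrigidity gluing and the extension, and without it your $h$ is never constructed.

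Two secondary problems. First, in excluding atomic $\nu=h_*\mu$ you argue that a fiber is ``connected, hence ball-like''; on the torus a connected set need not lie in a ball (it can wrap around), and appealing to part (2) of Theorem~\ref{thm:main} is circular, since that statement is a consequence of the proposition under review. The paper instead uses the \cite{KRH} structure of fibers as intersections of nested cubes, which do sit inside balls, plus the observation that finitely many such cubes fit in a ball. Second, your entropy argument for absolute continuity and uniqueness --- factor inequality $h_\mu(\rho(a))\ge h_\lambda(\rho_0(a))$, Ruelle's inequality, exponent rigidity, Pesin equality, Ledrappier--Young, then Fubini along one-dimensional coarse Lyapunov foliations --- is essentially a re-derivation of the cited theorem of \cite{KRH} on uniqueness and absolute continuity of large invariant measures for $\Z^{n-1}$-actions with Cartan homotopy data, which the paper simply applies as a black box once $h_*\mu=\lambda$ is established. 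That outline is consistent with how \cite{KRH} actually proceeds, but it cannot salvage the proposal, because everything downstream presupposes the globally equivariant $h$ whose construction is exactly where your argument fails.
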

\begin{proof}  
The scheme of the proof is as follows:
First we construct a  measurable  map $h$  defined $\mu$ almost everywhere and satisfying \eqref
{eq-conj}. Then  we show that $h$  extends  to a  continuous map defined on $\supp\mu$. Next we 
show that the   image  of this map is a complement to a finite set  and  \eqref{eq-large} holds. 
Finally  we  show that $h$ extends uniquely from $\supp \mu$   to the whole torus.  

Once we get this we argue as follows. Let $\nu=h_*\mu$; the measure $\nu$ is invariant and ergodic with respect to the  
linear $\Gamma'$ action, hence it is either Lebesgue or atomic. It will not be atomic 
(see bellow) hence $\nu$ is Lebesgue measure $\lambda$, this means that $\mu$ is a large 
measure  as defined in \cite{KRH}, i.e  $h_*\mu=\lambda$. Since for any Cartan subgroup $C
\subset SL(n,\R)$ large invariant measure is unique and absolutely continuous by \cite{KRH}, if we 
take a Cartan subgroup $C\subset\Gamma'$ (which can be done since $\Gamma'$ is of finite 
index in $SL(n,\Z)$), measure $\mu$ must coincide with this large measure and absolute continuity 
and uniqueness  follow. 

\smallskip

\noindent{\em Step 1.}
We shall use the following consequence of Zimmer's cocycle super-rigidity 
(see the proof of the Main Theorem in \cite{FW}):  
\begin{lemma}
Under the assumptions of Proposition~\ref{prop:main}  there is a 
measurable map $\phi:\T^n\to\R^n$ defined $\mu$ a.e. such that if we put $h_0(x)=x+\phi(x)$ then 
(\ref{eq-conj}) holds for $h_0$, i.e. 
$h_0\circ\rho(\gamma)=\rho_0(\gamma)\circ h_0$ for every $\gamma\in\Gamma'$ for some finite 
index subgroup $\Gamma'\subset\Gamma$.  
\end{lemma}
Let  $A$ be a set of full $\mu$ measure where the equality (\ref{eq-conj}) holds for $h_0$. Without 
loss of generality we may assume that $A$ is invariant with respect to  $\rho(\gamma)$ for every $\gamma\in
\Gamma'$. \smallskip

\noindent{\em Step 2.}  Now  we prove  that $\phi$ extends to a continuous map $\supp \mu \to \R^n$.

Let $\gamma\in\Gamma'$ be a hyperbolic matrix so that $\rho_0(\gamma)$ is an Anosov linear 
map. Then there is a continuous map $\phi_
\gamma: \T^n\to \R^n$ such that for $h_\gamma(x)=x+\phi_\gamma(x)$ one has $h_{\gamma}
\circ\rho(\gamma)=\rho_0(\gamma)\circ h_{\gamma}$. Let $v(x)=\phi_\gamma-\phi$.  $v\circ\rho
(\gamma)=\gamma v$ on $\T^n$, i.e. there is an integer $c\in\Z^n$ such that $v\circ\rho(\gamma)=
\gamma v+c$.  
This implies that $v(x)$ is  constant $\mu$ a.e. Indeed, let us write $c=(I-\gamma)c'$ and put $v'=v-
c'$, then  $$v'\circ\rho(\gamma)=v\circ\rho(\gamma)-c'=\gamma v+c-c'=\gamma v-\gamma c'=
\gamma v'.$$ Hence it is enough to see that $v'$ is constant $\mu$-a.e. Let $L_C$ be the set 
where $|v'(x)|<C$, taking $C$ large $L_C$ has measure as close to $1$ as wanted. Call $f=\rho(\gamma)$ and take $x\in L_C$ such that $x$ returns infinitely many 
times to $L_C$ in the future and the past. This set has full $\mu$ measure in $L_C$ by Poincar\'e 
recurrence. Then $v'(f^n(x))=\gamma^nv'(x)$. Observe that $\R^n$ splits as $E^u_\gamma\oplus 
E^s_\gamma$. Let $v'(x)$ decompose as $v'^s(x)+v'^u(x)$. Then $$C\geq |v'(f^n(x))|=|\gamma^n v'
(x)|\geq K|\gamma^n v'^u(x)|\geq K\lambda^n|v'^u(x)|$$ for some $\lambda>1$ which implies that 
$v'^u(x)=0$. Reversing time we obtain  $v'^s(x)=0$.  

Hence $\phi=\phi_\gamma-c'$ $\mu$ a.e. and $\phi$ extends continuously to the support of $\mu$; 
we will still denote this extended map by  
$\phi$. Let  $h:\supp(\mu)\to \T^n$, $h(x)=x+\phi(x)$, then we get that  $h\circ\rho(\gamma)=\rho_0
(\gamma)\circ h$ for every $\gamma\in\Gamma'$ since $h=h_0$ on a set 
of full $\mu$ measure. So, let us forget about $h_0$ and work with $h$. Notice  that for every 
hyperbolic $\gamma\in\Gamma'$ there is  $h_\gamma$ homotopic to the identity such that $h_
\gamma\circ \rho(\gamma)=\rho_0(\gamma)\circ h_\gamma$ and arguing as above we get that 
$h=h_\gamma+c'$ for some $c'\in(I-\Gamma)^{-1}(\Z^n)$ on the support of $\mu$. Observe also 
that changing $h_\gamma$ with $h_\gamma+c'$ we get that $h_\gamma+c'$ also conjugates $\rho
(\gamma)$ with $\gamma$. Hence we may assume already that $h$ and $h_\gamma$ coincide 
on $\supp \mu$ for every $\gamma\in\Gamma'$ hyperbolic. Since hyperbolic elements generate a finite index subgroup of  $\Gamma'$  we obtained desired map $h: \supp \mu \to \T^n$.
\smallskip

\noindent{\em Step 3.}  Let $\nu=h_*\mu$  be the push-forward of  the measure $\mu$. Obviously $
\nu$ is invariant and ergodic with respect to $\rho_0$ restricted to $\Gamma'$.   The only ergodic  
invariant measures for the $\Gamma'$ linear action $\rho_0$ are  Lebesgue  measure and 
measures 
supported on periodic orbits. Let us show that $\nu$ cannot have finite support. If this were the case, 
the support of $\mu$  would  belong to the union of pre-images of finitely many points. 
Take an element $\gamma\in\Gamma'$ inside a Cartan subgroup. Then we have 
that $h=h_\gamma$  and hence the pre-images of $h$ are inside the pre-images by $h_\gamma$. 
Now, the results in \cite{KRH} gives us that  the pre-image of every point by $h_\gamma$ is inside a 
ball, indeed the 
pre-image of every point is the intersection of  nested cubes. Hence the support of the measure is 
inside a finite disjoint union of cubes. Now, it is not hard to see that this finite disjoint union of cubes 
fit inside a ball, which implies that the support of $\mu$ is inside a ball, a contradiction. 

Thus we get that $\nu$ is Lebesgue measure and hence for every Anosov element of $\rho_0$, $
\mu$ is a large measure as defined in \cite{KRH}. 
\smallskip

\noindent{\em Step 4.}  Let us see that $h$ can be extended to the rest of the torus conjugating the 
whole action. Let us 
make the following observations. Let $R$ be the set of regular points which we may assume that 
are regular for all hyperbolic elements of the action. It has full $\mu$ measure. Then for every $x$ 
in $R$ 
and for every $\gamma\in\Gamma$ belonging to a Cartan subgroup we have that $W^s_\gamma
(x)\subset\supp(\mu)$. 

Let $U$ be a connected component of the complement of $\supp(\mu)$. Let us see that for every $
\gamma\in \Gamma$ an element in a Cartan subgroup $C_\gamma$, $h_\gamma(U)$ is a point 
and moreover this point does not depend on $\gamma$. Fix first $\gamma$ and take a point $x\in 
U$. We know from \cite{KRH} that  $x$ lies inside the intersection of a family of nested cubes $C_n
$ such that $\bigcap C_n=(h_\gamma)^{-1}h_\gamma(x)$ and $h_\gamma(C_n)$ is a family of 
cubes such that $\bigcap h_\gamma(C_n)=h_\gamma(x)$. The boundary of the cubes $C_n$ are 
formed by pieces of stable manifolds of regular elements with respect to different elements of $C_
\gamma$. Hence the boundary of $C_n$ is in $\supp(\mu)$ and since $U$ is connected and $U
\cap\supp(\mu)=\emptyset$ we get that $U\subset C_n$. In particular $h_\gamma(U)=h_\gamma(x)
$. So the whole $U$ is collapsed  by $h_\gamma$ into a single point. Let us take now another 
Cartan element $\gamma'$. Then $h_{\gamma'}(U)=h_{\gamma'}(x)$ also. Now, we want to proof 
that $h_\gamma(x)=h_{\gamma'}(x)$. But the boundary of $U$ is in the support of $\mu$ and on the 
support of $\mu$ we know that $h_\gamma$, $h_{\gamma'}$ and $h$ coincide. Hence $$h_
\gamma(x)=h_\gamma(U)=h_\gamma(\partial U)=h(\partial U)=h_{\gamma'}(\partial U)=h_
{\gamma'}(U)=h_{\gamma'}(x).$$
Since  Cartan subgroups generate $SL(n,\Z)$ this finishes the proof.
\end{proof}

The next  step in the proof of Theorem \ref{thm:main}  is finding a periodic orbit for  the action $\rho
$. 

\begin{proposition}\label{prop:properpoint}
Let $\rho$ be a $\Gamma$ action preserving a large invariant measure $\mu$ as in Proposition \ref
{prop:main}. Then if $\Gamma'$ is the finite index subgroup provided by Proposition \ref{prop:main}, 
there is a periodic point $p\in\supp\mu$ for the action $\rho$ restricted to $\Gamma'$ such that the derivative of $\rho$ at $p$ 
 coincides with the  standard linear action on $\R^n$.
\end{proposition}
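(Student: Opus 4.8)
The plan is to produce the periodic point from the semiconjugacy $h$ of Proposition~\ref{prop:main} and then to identify its derivative by superrigidity. On $\supp\mu$ we have $h\circ\rho(\gamma)=\rho_0(\gamma)\circ h$ and $h_*\mu=\lambda$, so $h:\supp\mu\to\T^n$ is a surjective semiconjugacy which, the blow-up set being $\lambda$-null, is a measurable isomorphism mod $0$. Every rational point of $\T^n$ is periodic for $\rho_0$ restricted to $\Gamma'$, since $SL(n,\Z)$ permutes the finite set of points of each fixed order; hence it suffices to find a rational $q$ whose fiber $h^{-1}(q)$ is a single point $p$, for then $p\in\supp\mu$ inherits the finite $\Gamma'$-orbit of $q$. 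By the description of the fibers of $h=h_\gamma$ in \cite{KRH} (nested intersections of cubes, degenerate to a point except over the at most countable, $\rho_0(\Gamma')$-invariant set $B$ coming from the complementary components of $\supp\mu$), it remains to check that not every periodic orbit of $\rho_0$ is blown up, i.e. that some rational $q\notin B$. Granting this, I pass to the finite-index stabilizer $\Gamma''=\operatorname{Stab}_{\Gamma'}(p)$, so that $p$ becomes a common fixed point and $\gamma\mapsto\pi(\gamma):=D_p\rho(\gamma)$ is a genuine homomorphism $\Gamma''\to GL(n,\R)$.

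Two facts will constrain $\pi$. First, since $\Gamma''$ is a higher-rank lattice, Margulis superrigidity forces $\pi$, on a further finite-index subgroup, to be $\tilde\pi\cdot\kappa$ with $\tilde\pi$ the restriction of a rational representation of $SL(n,\R)$ and $\kappa$ compact-valued and commuting with $\tilde\pi$; as the smallest nontrivial irreducible representation of $SL(n,\R)$ has dimension $n$, the $n$-dimensional $\tilde\pi$ is the standard representation $V$, its dual $V^*$, or trivial. Second, working globally, I would apply Zimmer cocycle superrigidity (as in \cite{FW}) to the derivative cocycle over $(\T^n,\mu,\rho)$: it is measurably cohomologous to a homomorphism $\bar\pi$, and since $h$ is a measurable isomorphism the metric entropy of $\mu$ for each $\rho(\gamma)$ equals that of Lebesgue for the linear model. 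By the Pesin entropy formula, comparing these entropies across a Cartan subgroup's worth of elements forces the Lyapunov spectrum of $\mu$ to match the eigenvalue moduli of $V$, so that $\bar\pi=V$ and there is no neutral (compact) part.

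The crux is to pass from this global normalization to the \emph{local} representation at the single, $\mu$-null point $p$. Since $p\in\supp\mu$ and a hyperbolic $\gamma\in\Gamma''$ makes $\rho(\gamma)$ hyperbolic along $\supp\mu$, I would transfer the identity $\bar\pi=V$ to the periodic orbit of $p$ by a Liv\v sic-type argument for the matrix cocycle $D\rho(\gamma)$: the almost-everywhere conjugacy to the constant $V$ is carried by the stable and unstable holonomies to a continuous one on $\supp\mu$, whose value along the finite orbit of $p$ conjugates $\pi$ to $V$. The dual $V^*$ is independently excluded because, with $q\notin B$, the map $h$ is a homeomorphism near $p$, so for a hyperbolic $\gamma$ with distinct numbers of contracting and expanding eigenvalues the local topological conjugacy of the germ of $\rho(\gamma)$ at $p$ with the linear Anosov germ at $q$ pins the topological index. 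This matrix Liv\v sic transfer, controlling the cocycle at a periodic point from almost-everywhere information, is the step I expect to be the main obstacle. Once it is carried out, $\pi$ is conjugate to the standard representation, and choosing the identification $T_p\T^n\cong\R^n$ accordingly yields $D_p\rho(\gamma)=\gamma$ for all $\gamma\in\Gamma''$, as claimed.
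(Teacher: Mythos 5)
Your reduction of the existence claim to finding a rational point $q$ with singleton fiber $h^{-1}(q)$ is exactly the right formulation, but that reduction \emph{is} the proposition's main content, and you leave it unproved (``Granting this\dots''). The justification you sketch --- that the bad set $B$ is at most countable and arises from complementary components of $\supp\mu$ --- does not hold up: nondegenerate fibers need not come from complementary components (they can be continua lying inside $\supp\mu$), the family of nondegenerate fibers of such a semiconjugacy can be uncountable and dense (already for a single derived-from-Anosov map on $\T^2$ the fiber over \emph{every} point of the stable leaf of the blown-up fixed point is a nondegenerate arc), and even countability plus $\rho_0(\Gamma')$-invariance would not exclude $B\supset\Q^n/\Z^n$, since the rational points themselves form a countable invariant set. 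The paper imports precisely this missing step as a theorem from \cite{KRH} (Theorem \ref{properperiodic}): for a $\Z^{n-1}$-action with Cartan homotopy data there is a \emph{proper} periodic point $p\in\supp\mu$, i.e.\ $(h_\eta)^{-1}(h_\eta(p))=p$, and moreover the Lyapunov exponents at $p$ coincide with those of the linear model. That is a substantive higher-rank rigidity result, not a countability bookkeeping, and it is what makes the rest of the argument short.

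On the derivative, your use of Margulis superrigidity for $\pi(\gamma)=D_p\rho(\gamma)$ on the stabilizer matches the paper, but your elimination of the trivial representation, the dual $V^*$, and the compact part runs through a global scheme (Zimmer cocycle superrigidity for the derivative cocycle, entropy comparison via Pesin's formula, then a ``matrix Liv\v sic transfer'' of the almost-everywhere normalization to the orbit of $p$) whose crux you yourself flag as the main obstacle --- correctly so: Zimmer's theorem produces only a \emph{measurable} transfer function, and upgrading an a.e.\ cohomology of a H\"older matrix cocycle to pointwise information at a specific periodic orbit is not available in general (Kalinin-type Liv\v sic theorems give approximation of Lyapunov exponents by periodic data, not conjugacy of the cocycle at a prescribed periodic point, absent bunching or conformality hypotheses). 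Your independent exclusion of $V^*$ also assumes $h$ is a homeomorphism near $p$, which a singleton fiber at $p$ alone does not give, since nearby fibers may still be nondegenerate. The paper bypasses all of this with the second half of the same cited theorem: the Lyapunov exponents at $p$ equal those of $\rho_0(\gamma)$, so the derivative homomorphism at $p$ is nontrivial with the correct spectrum, and finite-dimensional Margulis superrigidity then forces it to be the standard representation up to a conjugacy absorbed into the choice of chart. In short, your architecture is sound in outline, but both load-bearing steps --- properness of some periodic point, and the identification of the derivative --- are respectively granted and resting on an unproven transfer; in the paper both are supplied at once by Theorem \ref{properperiodic} of \cite{KRH}.
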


\begin{proof} Let us recall another  result from \cite{KRH}.
\begin{theorem}\label{properperiodic}
Given an  action of $\Z^{n-1}$  on $\T^n$ with Cartan homotopy data, there is a proper periodic 
point, that is, if $h_\eta$ is a semiconjugacy with the linear action (work with a finite index subgroup 
if needed) then there is a periodic point $p$ for the action inside the support of the large 
measure such that $(h_\eta)^{-1}(h_\eta(p))=p$. Moreover the Lyapunov exponents for this point 
coincide with the Lyapunov exponents of the linear map. 
\end{theorem}

Recall that by the proof of Proposition \ref{prop:main} Step 2, we may assume that the 
semiconjugacy $h$ coincides with $h_\eta$ for every $\eta\in\Gamma'$. Hence, if $p$ is a proper 
periodic point for a Cartan action as in Theorem \ref{properperiodic} then $p=h^{-1}(h(p))$. Call 
$q=h(p)$ and observe that since $q$ is periodic for a linear Anosov map it has to be a rational 
point, hence it is periodic for the whole linear $SL(n,\Z)$ action. Finally, since $h\circ\rho(\gamma)=
\rho_0(\gamma)\circ h$ for every $\gamma\in\Gamma'$ we have that $p$ is periodic for the $\rho$ 
action restricted to $\Gamma'$. So, we may take a finite index subgroup $\Gamma''\subset
\Gamma'$ and assume that $p$ is fixed. We will assume that $\Gamma'$ already equals $
\Gamma''$. Since the derivative cocycle (homomorphism) at $p$ is nontrivial by Theorem \ref
{properperiodic}, Margulis super-rigidity Theorem implies that the derivative at $p$ coincides with the   
linear one up to some conjugacy that we may assume to be trivial by taking an appropriate coordinate 
chart at $p$. \end{proof} 

The next step is a  linearization of our action in a neighborhood  of the  periodic point $p$. This is the  only 
place where we use real analyticity  of the action.  We shall use the local 
linearization theorem of G. Cairns and E. Ghys for real analytic actions, \cite{CG}.  

\begin{theorem}\label{locallinearization}
Let $\Gamma$ be any irreducible lattice in a connected semi-simple Lie group with finite center, no 
non-trivial compact factor group and of rank greater  than 1. Every $C^\omega$-action of $\Gamma
$ on $(\R^m,0)$ is linearizable.
\end{theorem}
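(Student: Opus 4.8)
The plan is to produce a single analytic germ of diffeomorphism $H$ fixing the origin that simultaneously conjugates every $\rho(\gamma)$ to its linear part, obtained as the sum of a formal power series conjugacy built jet by jet. Write $F_\gamma$ for the germ at $0$ of $\rho(\gamma)$ and let $\sigma=D_0\rho:\Gamma\to\GmR$, $\sigma(\gamma)=D_0F_\gamma$, be the derivative representation. The first input is Margulis superrigidity: since $\Gamma$ is a higher-rank irreducible lattice in $G$, after passing to a finite index subgroup $\sigma$ either has precompact image or extends to a rational representation of $G$ on $\Rm$. Since $G$ has no nontrivial compact factors and $\Gamma$ has property (T), a precompact image is forced to be finite, so on a finite index subgroup $\sigma$ is trivial; this degenerate case is handled along the same lines below, and I will describe the main case in which $\sigma$ extends to $G$. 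In particular each finite-dimensional space $P_k=\operatorname{Sym}^k((\Rm)^{*})\otimes\Rm$ of homogeneous degree-$k$ polynomial maps carries a $G$-module structure via $\sigma$.

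First I would carry out the formal linearization by induction on the degree. Suppose coordinates have been chosen so that $F_\gamma(x)=\sigma(\gamma)x+f^{(k)}_\gamma(x)+O(|x|^{k+1})$, with all intermediate homogeneous terms of degrees $2,\dots,k-1$ already removed and $f^{(k)}_\gamma\in P_k$. The homomorphism relation $F_{\gamma\delta}=F_\gamma\circ F_\delta$, read off in degree $k$, says exactly that $\gamma\mapsto f^{(k)}_\gamma$ is a $1$-cocycle of $\Gamma$ with values in $P_k$, while a change of coordinates $H=\Id+h$ with $h\in P_k$ alters this term by the coboundary $\gamma\cdot h-h$. Thus the obstruction to removing the degree-$k$ term for all $\gamma$ at once is the class of $f^{(k)}$ in $H^1(\Gamma,P_k)$. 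The decisive point, and the reason higher rank is essential, is that this group vanishes: as $P_k$ is a finite-dimensional $G$-module, Whitehead's lemma gives $H^1(\mathfrak g,P_k)=0$, and the comparison between lattice and Lie-group cohomology (equivalently the $1$-cocycle form of superrigidity, together with a transfer argument to pass from the finite index subgroup back to $\Gamma$) yields $H^1(\Gamma,P_k)=0$. Hence every jet obstruction is a coboundary and can be killed; iterating over $k\ge 2$ produces a formal conjugacy $\widehat H=\Id+O(|x|^2)$ with $\widehat H\circ F_\gamma\circ\widehat H^{-1}=\sigma(\gamma)$ as formal power series for every $\gamma\in\Gamma$.

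The hard part will be convergence of $\widehat H$, and this is where I expect the main obstacle to lie. The naive route, namely to find one element $\gamma_0$ whose linear part $\sigma(\gamma_0)$ is a contraction and to invoke Poincaré's analytic linearization theorem for the single map $F_{\gamma_0}$, is simply unavailable: because $G$ is semisimple, every Cartan logarithm is traceless, so the eigenvalues of any $\sigma(\gamma)$ have barycenter $0$ and no element can have all its eigenvalues inside the unit disc. Every hyperbolic element therefore lies in the Siegel domain and genuine small divisors appear. The resolution must use the group rather than a single element: I would restrict to a higher-rank abelian subgroup $A\subset\Gamma$ with $\sigma(A)$ a group of commuting $\R$-regular elements spanning a Weyl chamber and exploit that the associated Lyapunov exponents are in general position. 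Concretely, a monomial that is resonant (hence a small divisor) for one element of $A$ is strongly hyperbolic for another, so the \emph{joint} divisors admit a uniform lower bound with no Diophantine loss. This uniform joint hyperbolicity is precisely the mechanism behind higher-rank rigidity, and it should feed a tame estimate on the cohomological equations solved above, or equivalently drive a rapidly converging KAM-type iteration, giving geometric control of the Taylor coefficients of $\widehat H$ and hence convergence on a neighborhood of $0$.

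Finally I would upgrade the convergent germ to a genuine conjugacy of the full action. Once $\widehat H$ is known to converge to an analytic germ $H$, for each $\gamma\in\Gamma$ the two analytic maps $H\circ F_\gamma\circ H^{-1}$ and $\sigma(\gamma)$ have identical Taylor expansions at $0$ and therefore coincide as germs; so $H$ linearizes every $F_\gamma$ at once. Because the cohomological vanishing and the formal construction were carried out over $\Gamma$ itself, no separate finite-index extension is needed at the end, and the same argument covers the degenerate case $\sigma\equiv\Id$ (where $H^1(\Gamma,P_k)=\operatorname{Hom}(\Gamma,P_k)=0$ by property (T)), completing the proof.
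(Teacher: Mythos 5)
First, a point of comparison: the paper does not prove this statement at all --- it is imported verbatim from Cairns--Ghys \cite{CG} --- so your attempt can only be measured against the known proof in the literature. Your first half is sound and does match the standard opening moves: Margulis superrigidity to extend the derivative representation $\sigma$ over a finite-index subgroup, and jet-by-jet formal linearization via vanishing of $H^1(\Gamma,P_k)$ (though note that your appeal to property (T) is wrong in the stated generality: an irreducible lattice in $SL(2,\R)\times SL(2,\R)$ satisfies the rank hypothesis but does not have (T); the correct inputs are Margulis' vanishing of $H^1$ with arbitrary finite-dimensional coefficients and finiteness of the abelianization). Your observation that Poincar\'e's theorem is unavailable because $|\det\sigma(\gamma)|=1$ forbids contracting elements is also correct.

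The genuine gap is the convergence step, which is where the entire analytic content of the theorem lives: formal linearizability does not imply analytic linearizability even for a single germ (Cremer--Yoccoz non-Brjuno examples), so everything rests on the estimates, and your proposed mechanism breaks at a concrete point, namely \emph{exact joint resonances}. Take $\sigma$ to be the standard representation of $SL(n,\R)$ and $A$ a finite-index subgroup of the lattice part of the diagonal group. Since $\prod_i\lambda_i(\gamma)=1$ for every $\gamma\in A$, the monomial $x_j(x_1\cdots x_n)^m$ is resonant for \emph{every} element of $A$ simultaneously, for every $m\ge 1$; more generally, whenever $\mu_j-\sum_i\alpha_i\mu_i=0$ in the weight lattice --- which happens for infinitely many pairs $(\alpha,j)$ in any nontrivial rational representation --- every element of the maximal split torus has vanishing divisor. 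So your key claim, ``a monomial that is resonant for one element of $A$ is strongly hyperbolic for another,'' fails exactly where it is needed: there is no uniform lower bound on the joint divisors, and infinitely many Taylor coefficients of $\widehat H$ are left completely uncontrolled by the abelian cohomological equations. Controlling them forces genuine quantitative use of the \emph{non-commuting} elements of $\Gamma$ (Weyl-group or unipotent elements permuting the weights), and your sketch contains no such mechanism --- the sentence ``should feed a tame estimate \dots\ or drive a rapidly converging KAM-type iteration'' is a restatement of the theorem, not a proof of it. Two smaller defects: the finite-index transfer at the end is asserted rather than proved (superrigidity extends $\sigma$ only over $\Gamma'$, so one must still argue, e.g.\ via uniqueness of a normalized linearizing germ, that the conjugacy built for $\Gamma'$ also linearizes all of $\Gamma$); and in the degenerate case of trivial linear part your KAM mechanism is vacuous for lack of any hyperbolicity --- the correct and simpler remark is that vanishing of $\operatorname{Hom}(\Gamma,P_k)$ kills each degree-$k$ cocycle outright, so every $F_\gamma$ has identity Taylor series and, being analytic, is the identity germ.
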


In particular, Theorem \ref{locallinearization} applies to $\Gamma'$, a finite index subgroup of $SL
(n,\Z)$.  In this case the linear action is conjugate to the standard action. So this theorem gives us a real analytic map $H:U\to \T^n$ where $U\subset\T^n$ is a 
neighborhood of $q$ such that $H\circ\rho_0(\gamma)(x)=\rho(\gamma)\circ H(x)$ for every $
\gamma\in\Gamma'$ and 
for every $x$ where the above equality make sense. $H(q)=p$ and we may assume without loss of 
generality that $D_qH=Id$.

To finish the proof of Theorem~\ref{thm:main} it is sufficient to show  that the semiconjugacy $h$ 
coincides with $H^{-1}$ and hence it is analytic and invertible  in a neighborhood $V$ of $p$. For, 
then  the set  of injectivity  for $h$  contains a $\rho$-invariant open set $\mathcal V=\bigcup_
{\gamma\in\Gamma'}\rho(\gamma)V$. The set  $h(\mathcal V)$ is open and $\rho_0$-invariant, 
hence is complement is finite.  But   $h$ is  real analytic on $\mathcal V$ since 
on $\rho(\gamma)V$ it coincides with  $\rho_0(\gamma)\circ H^{-1}\circ \rho(\gamma)$. 



\;\;

Let us see that $H^{-1}$ coincides with $h$ in a neighborhood of $p$. Let $\gamma\in\Gamma'$ be 
an element in a Cartan subgroup $C_\gamma$ and assume its stable manifold for the linear 
element is one dimensional. Call $W^s_{\gamma}(p)$ the stable manifold of $p$ 
for $\rho(\gamma)$ and $E^s_\gamma$ the stable space for $\rho_0(\gamma)$.  $C_\gamma$ 
acts locally 
transitively on $W^s_\gamma(p)$ and on $q+E^s_\gamma$. By uniqueness of the invariant 
manifolds, we have that $$H(q+E^s_\gamma\cap B_\epsilon(q))\subset W^s_\gamma(p).$$ We 
know from the results in \cite{KRH} that $h=h_\gamma$ is smooth at $W^s_\gamma(p)$. Hence we 
have that $H^{-1}$ coincides with $h$ along $W^s_\gamma(p)\cap B_\delta(p)$, where $\delta$ is 
such that $H(B_\epsilon(q))\supset B_\delta(p)$. Now, $A=\bigcup_\gamma q+E^s_\gamma\cap 
B_\epsilon(q)$ where $\gamma\in\Gamma'$ ranges over the elements belonging to a 
Cartan subgroup and with one-dimensional stable manifold is dense in $B_\epsilon(q)$. This is 
because the projective action on $S^{n-1}$ is minimal and $A$ corresponds with the orbit of a point 
in $S^{n-1}$, namely the orbit of the direction associated to the stable manifold of one of this $\gamma$'s. Hence $H(A)$ is dense in $B_\delta(p)$. On the other hand $h$ and $H^{-1}$ coincide on $H(A)$ hence by continuity they coincide on $B_\delta(p)$ which finishes the proof of statements (1) and (2)
of Theorem~\ref{thm:main}.    

To prove statement (3)  notice that $SL(n,\Z)$ is generated  by its maximal 
Cartan subgroups. For any such subgroup $C$ the semi-conjugacy $P$ 
conjugates restriction of $\rho$ to $C$ with an affine action $\rho_C$ with  standard homotopy data (see \cite{KKRH-errata} for a detailed proof). Elements of such an  action $\rho_C$ are compositions 
of automorphisms and rational translations.  In particular,  $\rho_C$ preserves
the finite set $Fix_C$ of fixed points of $\rho_0$ restricted to $C$. Since  the intersection of those fixed point sets for different Cartan subgroups is the identity, this implies  that the only affine action of $SL(n, \Z)$ with standard homotopy data is $\rho_0$. 

Thus  for any Cartan subgroup $C$, the action $\rho$ preserves the set 
$P^{-1}(Fix_C)$  and hence their intersection $P^{-1}(0)$.  This implies that
$\rho_C$  coincides with $\rho_0$ for any Cartan subgroup $C$ and statement (3) follows. 

\section{Remarks and open problems}
With a  more careful analysis of the structure  of finite index subgroups of 
 $SL(n, \Z)$ one can very likely prove that 
the semiconjugacy $P$ also  serves as   a semiconjugacy  between the whole action $\rho$ and an   affine  action $\tilde\rho_0$ with the standard homotopy data.  Basically the question reduces  to consideration of finite order elements 
of $\Gamma$ that  are not products of elements of   Cartan subgroups.

A much more interesting question  concerns  restrictions of $\rho_0$ to  infinite index subgroups  whose linear representations are  rigid, such as integer
lattices in  other higher rank simple Lie groups or irreducible representations
of $SL(n,\Z)$  into $SL(N, \Z)$ for large $N$. While Cairns-Ghys Theorem~\ref{locallinearization} is available in those settings, the other key ingredient, 
 a weaker form of rigidity for maximal abelian subgroups  like in \cite{kk, KRH}, is missing. 
Progress  in this direction    beyond  the Cartan case has been achieved  recently in \cite{KRH2} but the strong simplicity condition  of that paper is not satisfied  in the interesting cases  mentioned above. Furthermore, in some cases, such as the representation of $SL(n,\Z)$  into $SL(n^2-1, \Z)$
given by the  adjoint action on traceless matrices, there are no Anosov elements altogether and one  should hope to tie elements of rigidity for  partially hyperbolic  abelian subgroups together to produce rigidity for the whole action. 

A more immediate and  probably more accessible issue is  extension of  our results to the differentiable case. Here the situation is reversed: rigidity  for Cartan actions is   proven but local linearization  is not, and it may not even be 
true. Rigidity  for Cartan actions provides  extensive information about the
semi-conjugacy $P$. Already for  a Cartan subgroup, it is smooth in  Whitney sense  on  sets whose measure  is arbitrary close to full measure, see \cite{KRH2}. Those 
sets include grids of codimension one smooth submanifolds that  divide the  space into ``boxes'', most of them small. Superimposing those pictures for different Cartan subgroups  provides such grids in a dense set of  directions. 
However, certain elements of uniformity  needed to   conclude smoothness on an open set, are  lacking.

\bibliographystyle{alpha}  \end{document}